\NewDocumentCommand{\paren}{s m}{%
  \IfBooleanTF{#1}{%
    \mathopen{} (#2)\mathclose{}%
  }%
  {%
    \mathopen{}\left(#2 \right)\mathclose{}%
  }%
}
\NewDocumentCommand{\apply}{s m m}{%
  \IfBooleanTF{#1}{%
    \,#2\paren*{#3}\,%
  }%
  {%
    \,#2\paren{#3}\,
  }%
}
\NewDocumentCommand{\bracketparen}{s m}{%
  \IfBooleanTF{#1}{%
    \mathopen{} [#2]\mathclose{}%
  }%
  {%
    \mathopen{}\left[#2 \right]\mathclose{}%
  }%
}
\NewDocumentCommand{\bracketapply}{s m m}{%
  \IfBooleanTF{#1}{%
    \,#2\bracketparen*{#3}\,%
  }%
  {%
    \,#2\bracketparen{#3}\,
  }%
}
\NewDocumentCommand{\mybrace}{s m}{%
  \IfBooleanTF{#1}{%
    \mathopen{} \{#2\}\mathclose{}%
  }%
  {%
    \mathopen{}\left\lbrace{#2} \right\rbrace\mathclose{}%
  }%
}
\NewDocumentCommand{\funch}{m}{\apply{h}{#1}}
\NewDocumentCommand{\funcphi}{m}{\apply{\varphi}{#1}}
\NewDocumentCommand{\funcexp}{m}{\apply{\exp}{#1}}
\NewDocumentCommand{\funclog}{m}{\apply{\log}{#1}}
\theoremstyle{plain}
\newtheorem{theorem}{Theorem}
\newtheorem{lemma}{Lemma}
\theoremstyle{definition}
\theoremstyle{remark}
\newtheorem*{note}{Note}
\newcommand*{\Real}{\mathbb R}
\newcommand*{\xizero}{{\xi_0}}
\newcommand*{\Nat}{\mathbb N}
\newcommand*{\NatInf}{\Nat_\infty}
\newcommand{\theFraction}{\frac{1}{2m}}
\title{Laplace's formula : an approach by nonstandard analysis}
\author{
  OZAKI Ryushi \\
  \texttt{ozaki.ryushi@gmail.com}
}
\begin{document}
\maketitle

\begin{abstract}

We give a proof of Laplace's formula using nonstandard analysis (NSA).
The use of NSA makes it possible to keep the classical heuristic argument essentially unchanged while avoiding the technical management of limiting processes required in a standard proof.
Only elementary tools from NSA are used.

\end{abstract}

\keywords{Laplace's formula \and Laplace's method \and maximum term method \and nonstandard analysis}

\section{Introduction}

Laplace's formula gives the asymptotic behavior of the integral
\begin{align}
&\int_a^b \funcphi{x}  \funcexp{n\,\funch{x}}  dx \label{ex:001}
\end{align}
as $n$ tends to infinity, under the assumption that $h$ attains its maximum at a unique point in the interval of integration.

For example, if $a=0,\  b=\infty,\  \funcphi{x}\equiv 1$, and $\funch{x}=\funclog{x} - x$, then the integral becomes
\begin{align}
&\int_0^\infty x^n \, e^{-x} \: dx = \apply{\Gamma}{n+1},
\end{align}
and one obtains an alternative derivation of Stirling's formula.
Laplace's formula has many applications in physics and engineering; several examples are discussed in \cite{Polya-and-Szego-1}.

We begin with a preliminary discussion.
The precise formulation and proof are given in Section \ref{sec:statement-and-the-proof}.

Suppose that $\funch{x}$ attains its maximum at a unique point $\xizero \in (a, b)$ and is sufficiently smooth at that point.
Consider
\begin{align}
&\funcexp{-n\,\funch{\xizero}} \int_a^b \funcphi{x}  \funcexp{n\, \funch{x}}  dx = \int_a^b \funcphi{x} \funcexp{n \, \left( h(x) - h(\xizero) \right)}  dx.\label{2025-02-27-004}
\end{align}
For large $n$, the exponential factor on the right-hand side decays rapidly away from $\xizero$.
Thus the main contribution to the integral comes from a small neighborhood of $\xizero$:
\begin{align}
  \text{RHS of \eqref{2025-02-27-004}}&\fallingdotseq \int_{\xizero - \varepsilon}^{\xizero + \varepsilon} \varphi(x) \funcexp{n \, \left(h(x) - h(\xizero) \right)}  dx.\label{2025-02-27-006}
\end{align}
Since $\varepsilon$ is small and $\varphi$ is continuous, $\varphi(x)$ may be replaced by $\varphi(\xizero)$ to first order.
This gives
\begin{align}
\text{RHS of \eqref{2025-02-27-006}} &\fallingdotseq \varphi(\xizero) \int_{\xizero - \varepsilon}^{\xizero + \varepsilon} \funcexp{n \, (h(x) - h(\xizero))}  dx.\label{2025-02-27-001}
\end{align}
Suppose moreover that $h''(\xizero) < 0$.
Then $h(x) - h(\xizero)$ is approximated by
\begin{align}
\frac{h''(\xizero)}{2}(x - \xizero)^2.\label{2025-02-27-002}
\end{align}
Using this approximation, we obtain 
\begin{align}
\text{RHS of \eqref{2025-02-27-001}} &\fallingdotseq \varphi(\xizero) \int_{\xizero - \varepsilon}^{\xizero + \varepsilon} \funcexp{n \, \frac{h''(\xizero)}{2}(x - \xizero)^2} dx.\\
\intertext{By a change of variable,}
&= \varphi(\xizero) \sqrt{\frac{2}{-n\,h''(\xizero)}} \enskip \int_{|z|\le R_{n, \varepsilon}} e^{-z^2}\,dz,\label{2025-02-27-009}
\end{align}
where
\begin{align*}
R_{n, \varepsilon} = \varepsilon \sqrt{\frac{-n\,h''(\xizero)}{2}}.
\end{align*}
If $n$ is taken large enough in comparison with the small parameter $\varepsilon$, then $R_{n, \varepsilon}$ is effectively infinite.
Hence
\begin{align*}
\text{RHS of \eqref{2025-02-27-009}} &\fallingdotseq \varphi(\xizero) \sqrt{\frac{2}{-n \,h''(\xizero)}} \enskip \int_{\Real} e^{-z^2} dz = \varphi(\xizero) \sqrt{ \frac{2\pi}{-n \,h''(\xizero)}}.
\end{align*}

We are thus led to the asymptotic formula
\begin{align*}
\int_a^b \varphi(x) \funcexp{n\,h(x)} dx \sim \varphi(\xizero) \funcexp{n\,h(\xizero)} \sqrt{\frac{2\pi}{-n\,h''(\xizero)}} \qquad (n\to \infty).
\end{align*}

The preceding discussion is heuristic and is not a proof, although it captures the main idea.
Its principal defects are the ambiguous use of $n$ as both an infinitely large and an ordinary large natural number, and the unspecified size of $\varepsilon$.
In a standard proof, these points must be replaced by a careful control of limiting processes, as in \cite{Polya-and-Szego-1}; the resulting argument can be rather difficult to follow.

Nonstandard analysis provides a rigorous way to preserve the intuitive structure of the preceding argument without introducing these technical complications.
Within NSA one may work directly with \emph{infinitely large natural numbers} and \emph{infinitesimal real numbers}; although these notions may appear paradoxical at first, they are well defined in the nonstandard framework.

In this note we prove Laplace's formula by NSA, following the outline just described.
The proof uses only elementary NSA and basic calculus; in particular, no delicate limit argument or Big-O notation is required.

Section \ref{sec:statement-and-the-proof} contains the precise statement and proof of Laplace's formula.
Section \ref{sec:generalization} gives a generalization and its proof.

In what follows we assume basic familiarity with Nelson's Internal Set Theory (IST) \cite{Nelson1977}.
Readers familiar with another standard framework for nonstandard analysis should nevertheless be able to follow the argument without difficulty.

\section{Exact Statement and Proof}\label{sec:statement-and-the-proof}

\begin{theorem}[Laplace's theorem]\label{th:Laplace}
Let $[a,b]\subseteq \Real$ be a finite or infinite interval. Let $\varphi(x)$ and $h(x)$ be functions defined on this interval, and let $\xizero \in (a,b)$. Assume that the following conditions hold:
\begin{enumerate}
 \item[(C1)] $\varphi(x) \funcexp{n\, h(x)}$ is absolutely integrable over $[a,b]$ for $n = 0,1,2,\ldots$.
 \item[(C2)] 
 There exists a neighborhood of $\xizero$ where $h''(x)$ exists and is continuous and $h''(\xizero) < 0$.
\item[(C3)] There exists $r_1 > 0$ such that for any $\rho \in [0, r_1)$,
\begin{align*}
&a \le y \le \xizero - \rho \le x \le \xizero \implies \apply{h}{y} \le \apply{h}{x}.
\end{align*}
\item[(C4)] There exists $r_2 > 0$ such that for any $\rho \in [0, r_2)$,
\begin{align*}
&\xizero \leq x \leq \xizero + \rho \leq y \leq b \implies h(x) \geq h(y).
\end{align*}
\item[(C5)] $\varphi(x)$ is continuous at $x = \xizero$ and $\varphi(\xizero) \neq 0$.
 \end{enumerate}
Then, as $n \to \infty$, the following asymptotic formula holds:
\begin{align*}
\int_a^b \varphi(x) \funcexp{n\, h(x)} dx \:\sim\: \varphi(\xizero) \funcexp{n\, h(\xizero)} \sqrt{-\frac{2\pi}{n h''(\xizero)}}.
\end{align*}
\end{theorem}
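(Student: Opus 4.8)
The plan is to use the usual NSA reduction. All the data $\varphi,h,a,b,\xizero$ (and the constants $r_1,r_2$) may be taken standard by transfer, and for standard $F,G\colon\Nat\to\Real$ with $G$ eventually nonzero the classical statement $F(n)\sim G(n)$ is equivalent to ``$F(n)/G(n)\approx 1$ for every unlimited $n\in\Nat$''. So I fix an unlimited $n$ and write $c=h(\xizero)$, $A=\varphi(\xizero)\neq 0$, $H=h''(\xizero)<0$, all standard; after pulling the standard factor $\exp(nc)$ out of both sides it remains to prove
\[
\int_a^b \varphi(x)\exp[n(h(x)-c)]\,dx \;\approx\; A\sqrt{-\tfrac{2\pi}{nH}}.
\]

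First I would fix, once and for all, a single \emph{standard} $\varepsilon>0$ small enough that (i) $[\xizero-\varepsilon,\xizero+\varepsilon]$ lies in the C2 neighbourhood and $h''\le H/2<0$ there, (ii) $\varepsilon<\min(r_1,r_2)$, and (iii) $|\varphi|\le M_0$ on $[\xizero-\varepsilon,\xizero+\varepsilon]$ for some standard $M_0$ (possible since $\varphi$ is continuous at $\xizero$). Since $\xizero$ is an interior maximum (from C3--C4 with small $\rho$) and $h$ is differentiable near $\xizero$, one has $h'(\xizero)=0$, so Taylor's formula with the continuous $h''$ gives $h(x)-c\le-\kappa(x-\xizero)^2$ on this interval with the standard $\kappa:=-H/4>0$; feeding $h(\xizero\pm\varepsilon)\le c-\kappa\varepsilon^2$ into C3 and C4 produces a standard $\delta>0$ with $h(x)\le c-\delta$ on $[a,b]\setminus[\xizero-\varepsilon,\xizero+\varepsilon]$. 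The outer part is then negligible: since $\int_a^b|\varphi|$ is standard and finite (C1 with $n=0$),
\[
\Bigl|\int_{[a,b]\setminus[\xizero-\varepsilon,\xizero+\varepsilon]}\varphi(x)\exp[n(h(x)-c)]\,dx\Bigr|\le e^{-n\delta}\int_a^b|\varphi|,
\]
and $\sqrt n\,e^{-n\delta}\approx 0$ for unlimited $n$ (a standard sequence tending to $0$), which dwarfs the main term $\asymp n^{-1/2}$.

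For the central part I would substitute $x=\xizero+t/\sqrt n$, turning it into $\tfrac1{\sqrt n}\int_{-\varepsilon\sqrt n}^{\varepsilon\sqrt n}f_n(t)\,dt$ with $f_n(t)=\varphi(\xizero+t/\sqrt n)\exp[n(h(\xizero+t/\sqrt n)-c)]$, where $\varepsilon\sqrt n$ is positive infinite. Two facts then drive the conclusion: the quadratic bound yields the domination $|f_n(t)|\le M_0\,e^{-\kappa t^2}$ for all $|t|\le\varepsilon\sqrt n$; and for every \emph{limited} $t$, continuity of $\varphi$ at $\xizero$ together with Taylor's formula $n(h(\xizero+t/\sqrt n)-c)=\tfrac12 h''(\theta)t^2$ and continuity of $h''$ at $\xizero$ give $f_n(t)\approx A\,e^{Ht^2/2}$. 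From these one gets $\int_{-\varepsilon\sqrt n}^{\varepsilon\sqrt n}f_n(t)\,dt\approx A\int_\Real e^{Ht^2/2}\,dt=A\sqrt{-2\pi/H}$, and multiplying by $1/\sqrt n$ gives exactly $A\sqrt{-2\pi/(nH)}$; adding the relatively infinitesimal outer contribution closes the argument.

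The main obstacle is that last interchange --- deducing $\int f_n\approx\int A e^{Ht^2/2}$ over the infinite range from pointwise infinite closeness for limited $t$ plus a Gaussian domination; it is the NSA counterpart of dominated convergence. I would handle it by splitting at a standard cutoff $T$: the tails $\int_{T<|t|\le\varepsilon\sqrt n}|f_n|$ and $\int_{|t|>T}|A|e^{Ht^2/2}$ are both bounded by the standard quantity $(M_0+|A|)\int_{|t|>T}e^{-\kappa t^2}\,dt$, which is made arbitrarily small by a standard-large $T$, while on the limited interval $[-T,T]$ the closeness $f_n(t)\approx A e^{Ht^2/2}$ is in fact uniform (the continuity estimates for $\varphi$ and $h''$ use only $|t|\le T$ and $n$ unlimited), so $\int_{-T}^T(f_n-A e^{Ht^2/2})\,dt\approx 0$. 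The only other points needing care are extracting a genuine \emph{standard} $\delta$ and verifying $h'(\xizero)=0$ cleanly from C2--C4; everything else is routine calculus once $\varepsilon$, $\delta$, $\kappa$ are in place.
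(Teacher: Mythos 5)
Your proposal is correct, but it takes a genuinely different route from the paper's. The paper keeps the original variable and works on a window of \emph{infinitesimal} half-width $\varepsilon=\nu^{-1/6}$: inside the window it compares $\exp(\nu p)$ with $\exp(\nu q)$, $q(x)=\tfrac12 h''(\xizero)(x-\xizero)^2$, pointwise on the shrinking interval and only changes variables at the very end; outside the window it shows $h(\xizero)-h(x)\ge -\tfrac14 h''(\xizero)\nu^{-1/3}$, so that $\sqrt{\nu}\,e^{\nu(h-h(\xizero))}\le C\nu^{-1/6}$, and it invokes Robinson's extension lemma to treat unbounded $[a,b]$. You instead fix a \emph{standard} window, extract a standard gap $\delta=\kappa\varepsilon^2$ from (C3)--(C4) so that the outer contribution is $O(e^{-n\delta})$ uniformly (no Robinson's lemma, and the finite and infinite cases are handled identically), then rescale by $t=\sqrt{n}(x-\xizero)$ and finish with an NSA dominated-convergence argument: Gaussian domination $|f_n(t)|\le M_0e^{-\kappa t^2}$ over the whole infinite range plus uniform infinite closeness $f_n(t)\approx\varphi(\xizero)e^{h''(\xizero)t^2/2}$ on each standard $[-T,T]$, with the tails controlled by a standard quantity that is made small by standard $T$. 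Your route costs the extra interchange lemma but buys robustness: the tail estimate is a one-liner, nothing depends on tuning a fractional exponent, and the central comparison reduces to continuity of $\varphi$ and $h''$ at the standard point $\xizero$. The paper's route avoids the interchange, but its central step is the delicate one --- note that its inequality \eqref{estimate:006} carries a factor $1/\nu$ where the derivative of $t\mapsto e^{\nu t}$ actually produces $\nu$, and with the corrected factor the bound $\nu|p-q|\le\tfrac14|h''(\xizero)|\nu^{2/3}$ is not infinitesimal, so that argument needs repair; your dominated-convergence formulation sidesteps this issue entirely. The small points you flag ($h'(\xizero)=0$ from (C2)--(C4), the standard $\delta$ obtained by feeding $h(\xizero\pm\varepsilon)\le h(\xizero)-\kappa\varepsilon^2$ into (C3)--(C4) with $\rho=\varepsilon$, integrability of $|\varphi|$ from (C1) with $n=0$, and the use of $\varphi(\xizero)\neq0$ from (C5) to pass from an infinitesimal difference to a ratio $\approx1$) all go through as you describe; the only wording slip is calling $\exp(n\,h(\xizero))$ a ``standard factor'' --- it is not standard for unlimited $n$, but cancelling it from the ratio is of course still legitimate.
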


\begin{note}
The seemingly complicated conditions (C3) and (C4) may be stated more simply.
For example, (C3) is equivalent to the conjunction of the following two conditions:
\begin{itemize}
  \item [(C3-1)] $\displaystyle \sup_{\enskip a\le \,y\, \le \xizero - r_1} \apply{h}{y} \le \apply{h}{\xizero - r_1}$,
  \item [(C3-2)] $h$ is monotone increasing on the interval $[\xizero - r_1, \xizero]$, in the non-strict sense.
\end{itemize}
\end{note}

\begin{proof}[Proof of Theorem \ref{th:Laplace}]
It is enough to prove
\begin{align}
\lim_{n \to \infty} \sqrt{n} \int_a^b \varphi(x) \funcexp{ n\,(h(x) - h(\xizero)) }  dx = \varphi(\xizero) \sqrt{-\frac{2\pi}{h''(\xizero)}}. \label{ex:purpose}
\end{align}
We first assume that $a$, $b$, $\varphi$, $h$, and $\xizero$ are standard.
Then the desired relation is equivalent to
\begin{align*}
\forall \nu \in \NatInf \quad \left( \sqrt{\nu} \int_a^b \varphi(x) \funcexp{ \nu \, (h(x) - h(\xizero)) }  dx \simeq \varphi(\xizero) \sqrt{-\frac{2\pi}{h''(\xizero)}} \right).
\end{align*}

Fix an arbitrary $\nu\in \NatInf$ and put $\varepsilon := \nu^{-1/6}$.
Then $\varepsilon$ is infinitesimal. By (C2), $h''(x)$ exists and is negative for $|x - \xizero| < \varepsilon$.
By (C3) and (C4), $\xizero$ is a local maximum point of $h$; hence, by (C2), $h'(\xizero)=0$.

We shall use the following estimate on the infinitesimal neighborhood $[\xizero - \varepsilon,\xizero+\varepsilon]$.
\begin{lemma}%
\label{lemma:esimate:001} 
For every $x \in [\xizero - \varepsilon, \xizero + \varepsilon]$, one has
\begin{align}
\frac{3}{2} h''(\xizero) \leq  h''(x) \leq \frac{1}{2} h''(\xizero)\,(<0). \label{estimate:001}
\end{align}
\end{lemma}
\begin{proof}[Proof of Lemma \ref{lemma:esimate:001}]
By the continuity of $h''$ on $[\xizero - \varepsilon, \xizero + \varepsilon]$, the difference $h''(x)-h''(\xizero)$ is infinitesimal. Equivalently,
\begin{align}
&\left\lvert h''(x) - h''(\xizero) \right\rvert \le u,
\end{align}
for every standard positive number $u$.
Since $h''(\xizero)$ is a standard negative number, choosing $u = \lvert h''(\xizero) \rvert/2 = -h''(\xizero)/2$ yields the desired bound on $h''(x)$.
\end{proof}

We now return to the proof of Theorem \ref{th:Laplace}.
We split the integral into three parts: 
\begin{align}
  \sqrt{\nu} \int_a^b \varphi(x) \funcexp{ \nu \, (h(x) - h(\xizero)) }  dx
  &= \sqrt{\nu} \int_a^{\xizero - \varepsilon} \varphi(x) \funcexp{ \nu \, (h(x) - h(\xizero)) }  dx \notag\\
  & + \sqrt{\nu} \int_{\xizero - \varepsilon}^{\xizero + \varepsilon} \varphi(x) \funcexp{ \nu \, (h(x) - h(\xizero)) }  dx \notag\\
  & + \sqrt{\nu} \int_{\xizero + \varepsilon}^b \varphi(x) \funcexp{ \nu \, (h(x) - h(\xizero)) }  dx.\label{eqn:026}
  \end{align}

We first estimate the first term on the right-hand side of \eqref{eqn:026}.
For any $x \in [a, \xizero - \varepsilon]$, we have
\begin{align*}
  h(x) - h(\xizero) &\le h(\xizero - \varepsilon) - h(\xizero) \\
  &= \int_{\xizero}^{\xizero - \varepsilon} \apply{h'}{t} dt = \int_{\xizero}^{\xizero - \varepsilon} \apply{h'}{t} - \underbrace{\apply{h'}{\xizero}}_{=0} dt\\
  &= \int_{\xizero}^{\xizero - \varepsilon} \paren{\int_{\xizero}^t \apply{h''}{u} du  } dt \le  \frac{h''(\xizero)}{2} \int_{\xizero}^{\xizero - \varepsilon} \left( \int_{\xizero}^{t} \, du \right) dt \\
  &= \frac{h''(\xizero)}{4} \varepsilon^2 = \frac{h''(\xizero)}{4} \nu^{-1/3} \enskip (<0),
\end{align*}
by condition (C3) and the estimate in \eqref{estimate:001}.

Thus
\begin{align}
&\nu(h(x) - h(\xizero)) \le \frac{h''(\xizero)}{4}\nu^{2/3} \:(<0),\\
\intertext{and since $\nu^{2/3}$ is infinitely large, we obtain}
 \sqrt{\nu} \funcexp{ \nu \, (h(x) - h(\xizero)) } &\le \nu^{3/6} \funcexp{\frac{h''(\xizero)}{4} \nu^{4/6}}  \notag \\
  &= \nu^{-1/6} \left( \nu^{4/6} \funcexp{ \frac{h''(\xizero)}{4} \nu^{4/6} } \right) \notag \\
&\le C \nu^{-1/6}, \label{estimate:002} \\
 \intertext{where $C$ is a positive constant. Hence}\\
 &\simeq 0.\notag
\end{align}

\noindent
\underline{Case 1:\enskip $a > -\infty$.}\enskip In this case, we simply have
\begin{align*}
&\int_{a}^{\xizero - \varepsilon} \sqrt{\nu}\enskip \varphi(x) \funcexp{ \nu \, (h(x) - h(\xizero))  } dx \: \simeq \: 0.
\end{align*}

\noindent
\underline{Case 2:\enskip $a = -\infty$.} \enskip For every standard $k=1,2,3,\ldots$ we have
\begin{align*}
&\int_{\xizero - k}^{\xizero - \varepsilon} \sqrt{\nu}\enskip \varphi(x) \funcexp{ \nu \, (h(x) - h(\xizero)) } dx \:\simeq \: 0.
\end{align*}
By Robinson's extension theorem, there exists some $\kappa \in \NatInf$ such that
\begin{align*}
&\int_{\xizero - \kappa}^{\xizero - \varepsilon} \sqrt{\nu}\enskip \varphi(x) \funcexp{ \nu \, (h(x) - h(\xizero))  } dx \:\simeq\: 0.
\end{align*}
By the integrability of $|\varphi(x)|$ over $[-\infty,b]$ and by \eqref{estimate:002},
\begin{align*}
& \left\lvert  \int_{-\infty}^{\xizero - \kappa} \sqrt{\nu}\enskip \varphi(x) \funcexp{\nu \, (h(x) - h(\xizero))} \, dx  \right\rvert \le C \,\nu^{-1/6}\int_{-\infty}^{\xizero - \kappa} |\varphi(x)| dx \:\simeq\: 0.
\end{align*}

Thus, whether or not $a=-\infty$, we obtain
\begin{align} 
  \int_{a}^{\xizero - \varepsilon} \sqrt{\nu} \enskip \varphi(x) \funcexp{ \nu \,(h(x) - h(\xizero)) } dx \:\simeq \: 0. \label{estimate:003} 
\end{align}

The same argument gives
\begin{align} 
  \int_{\xizero + \varepsilon}^{b} \sqrt{\nu} \enskip \varphi(x) \funcexp{ \nu \,(h(x) - h(\xizero)) } dx \:\simeq \: 0. \label{estimate:004} 
\end{align}

From \eqref{estimate:003} and \eqref{estimate:004} we obtain
\begin{align} 
  \sqrt{\nu} \int_{a}^{b} \varphi(x) \funcexp{ \nu \,(h(x) - h(\xizero))} dx \:\simeq\: \sqrt{\nu} \int_{\xizero - \varepsilon}^{\xizero + \varepsilon} \varphi(x) \funcexp{ \nu \,(h(x) - h(\xizero))} dx. \label{estimate:005} 
\end{align}

For any $x \in [\xizero - \varepsilon, \xizero + \varepsilon]$, define
\begin{align*} 
  p(x) &:= h(x) - h(\xizero), \\
  q(x) &:= \frac{h''(\xizero)}{2} (x - \xizero)^2.
\end{align*}
Clearly, $p(x)\le0$ and $q(x)\le0$.
Therefore
\begin{align} 
  \left\lvert \funcexp{\nu \, p(x)} - \funcexp{\nu \, q(x)} \right\rvert &\leq \frac{1}{\nu} \left\lvert \int_{q(x)}^{p(x)} | \exp(\nu \, t) | \, dt \right\rvert \leq \frac{|p(x) - q(x)|}{\nu}. \label{estimate:006} 
\end{align}

Since
\begin{align*} 
  p(x) &= h(x) - h(\xizero) = \int_{\xizero}^{x} \frac{d}{dt} (t - x) h'(t) \,dt = \int_{\xizero}^x (x-t) h''(t) \, dt, 
\end{align*}
and
\begin{align} 
  q(x) &= \frac{h''(\xizero)}{2} (x - \xizero)^2 = \int_{\xizero}^x (x-t) h''(\xizero) \,dt, 
\end{align}
we obtain the estimate
\begin{align*} 
  |p(x) - q(x)| &\leq \left\lvert \int_{\xizero}^x |x-t| \, |h''(t) - h''(\xizero)| \,dt \right\rvert \\
   &\leq \frac{1}{2} |h''(\xizero)| \left\lvert \int_{\xizero}^x |x-t| \,dt\right\rvert \\
    &= \frac{1}{4} |h''(\xizero)| \, |x - \xizero|^2 \\
     &\leq \frac{1}{4} |h''(\xizero)| \varepsilon^2 = \frac{1}{4} |h''(\xizero)| \nu^{-1/3}, 
\end{align*}
where the last inequality follows from \eqref{estimate:001}.

Substitution into \eqref{estimate:006} gives
\begin{align*} 
  \left\lvert \funcexp{\nu\, p(x)} - \funcexp{\nu\, q(x)} \right\rvert \le \frac{1}{4} |h''(\xizero)| \nu^{-8/6}. 
\end{align*}
Consequently,
\begin{align*} 
  \sqrt{\nu} \funcexp{\nu \,p(x)} \:\simeq\: \sqrt{\nu} \: \funcexp{\nu\,q(x)}. 
\end{align*}

By the continuity of $\varphi$ at $\xizero$, we have $\varphi(x) \simeq \varphi(\xizero)$.
Hence
\begin{align*} 
  \int_{\xizero - \varepsilon}^{\xizero + \varepsilon} \sqrt{\nu}\:\varphi(x) \funcexp{\nu \,p(x)}  dx \enskip &\simeq \int_{\xizero - \varepsilon}^{\xizero + \varepsilon} \sqrt{\nu}\: \varphi(\xizero)  \funcexp{\nu \, q(x)} dx\\
   &= \varphi(\xizero)\:\sqrt{\nu}\:\int_{\xizero - \varepsilon}^{\xizero + \varepsilon} \funcexp{\nu \, \left( \frac{h''(\xizero)}{2}(x-\xizero)^2 \right)}   dx. 
\end{align*}
Now make the change of variables
\begin{align*}
&z:= (x-\xizero)\sqrt{-\nu \frac{h''(\xizero)}{2}}.
\end{align*}
This gives
\begin{align*} 
  \int_{\xizero - \varepsilon}^{\xizero + \varepsilon} \sqrt{\nu}\:\varphi(x) \:\funcexp{\nu \, p(x)} \, dx &= \varphi(\xizero) \:\sqrt{\nu}\: \sqrt{-\frac{2}{\nu \:h''(\xizero)}} \int_{|z| \le R} e^{-z^2} \,dz\\
   &= \varphi(\xizero) \sqrt{-\frac{2}{h''(\xizero)}} \int_{|z| \le R} e^{-z^2} \,dz, \tag{*} \label{ex:007} 
\end{align*} 
where
\begin{align*}
  &R := \varepsilon \sqrt{\nu \frac{-h''(\xizero)}{2}} = \nu^{1/3} \sqrt{-\frac{h''(\xizero)}{2}}.
\end{align*}
Since $R$ is infinitely large, the integral in \eqref{ex:007} is infinitely close to $\displaystyle \int_{-\infty}^{\infty} e^{-z^2}\,dz = \sqrt{\pi}$.
Therefore
\begin{align*} 
  \eqref{ex:007} \simeq \varphi(\xizero) \sqrt{-\frac{2}{h''(\xizero)}} \sqrt{\pi} = \varphi(\xizero) \sqrt{-\frac{2\pi}{h''(\xizero)}}. 
\end{align*} 
Combining this approximation with \eqref{estimate:005}, we conclude that \begin{align*} \sqrt{\nu} \int_a^b \varphi(x) \funcexp{ \nu \, (h(x) - h(\xizero)) } dx \:\simeq\: \varphi(\xizero) \sqrt{-\frac{2\pi}{h''(\xizero)}}. 
\end{align*}

Since $\nu$ was arbitrary, \eqref{ex:purpose} holds for any standard $a, b, \varphi(x), h(x)$ and $\xizero$.
By the transfer principle, \eqref{ex:purpose} extends to all (possibly nonstandard) $a, b, \varphi(x), h(x)$ and $\xizero$.
This completes the proof. 
\end{proof}

\section{A Generalization}\label{sec:generalization}
The proof of Theorem \ref{th:Laplace} extends to the following more general formula.

\begin{theorem}[Generalized Laplace's theorem]\label{th:gen-Laplace}
Let $[a,b]\subseteq \Real$ be a finite or infinite interval, and let $\varphi(x)$ and $h(x)$ be functions defined on this interval. 
Suppose that $\xizero \in (a,b)$ and that $m$ is a natural number with $m \ge 1$.

Assume that these data satisfy (C1), (C3), (C4), and (C5) of Theorem \ref{th:Laplace}, together with the following modified smoothness condition:
\begin{enumerate}
  \item [(C2')] The function $h(x)$ satisfies $h^{(k)}(\xizero) = 0$ 
  for $1 \le k < 2m$, and $h^{(2m)}(\xizero) <0$.
\end{enumerate}
Then, as $n \to \infty$, the following asymptotic formula holds:
\begin{align*}
  &\int_a^b \varphi(x) \funcexp{n\, h(x)} dx \:\sim\: \varphi(\xizero) \funcexp{n\, h(\xizero)} \frac{\Gamma \left(\frac{1}{2m} \right)}{m} \left( -\frac{(2m)!}{n\, h^{(2m)}(\xizero) } \right)^\frac{1}{2m}.
\end{align*}
\end{theorem}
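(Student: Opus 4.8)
The plan is to run the proof of Theorem~\ref{th:Laplace} essentially line for line, with the second--order control \eqref{estimate:001} replaced by a $2m$--th order one coming from Taylor's theorem with Peano remainder. As there, we may assume $a,b,\varphi,h,\xizero$ and $m$ standard, and by transfer it suffices to prove that for every $\nu\in\NatInf$
\begin{align*}
\nu^{1/(2m)}\int_a^b\varphi(x)\exp\!\bigl[\nu(h(x)-h(\xizero))\bigr]\,dx\;\simeq\;\varphi(\xizero)\,\frac{\Gamma\!\left(\theFraction\right)}{m}\left(-\frac{(2m)!}{h^{(2m)}(\xizero)}\right)^{\!\theFraction}.
\end{align*}
Fix $\nu\in\NatInf$, put $\varepsilon:=\nu^{-1/(4m+2)}$ (for $m=1$ this is the $\nu^{-1/6}$ of Theorem~\ref{th:Laplace}) and $c:=-h^{(2m)}(\xizero)/(2m)!>0$. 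The one new ingredient is a nonstandard Peano estimate: since $h',\dots,h^{(2m-1)}$ exist near $\xizero$ and vanish there, and $h^{(2m)}(\xizero)$ exists, the standard true statement ``$\forall\delta>0\,\exists\rho>0\,\forall x\,\bigl(|x-\xizero|<\rho\Rightarrow|h(x)-h(\xizero)+c(x-\xizero)^{2m}|\le\delta|x-\xizero|^{2m}\bigr)$'' applied to each standard $\delta$, together with $\varepsilon\simeq0$, produces a single infinitesimal $\eta_0\simeq0$ with
\begin{align}
|x-\xizero|\le\varepsilon\;\Longrightarrow\;\bigl|h(x)-h(\xizero)+c(x-\xizero)^{2m}\bigr|\le\eta_0\,(x-\xizero)^{2m};\label{plan:peano}
\end{align}
in particular $-(c+\eta_0)(x-\xizero)^{2m}\le h(x)-h(\xizero)\le-(c-\eta_0)(x-\xizero)^{2m}\le0$ with $c-\eta_0\ge c/2$. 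This replaces \eqref{estimate:001}.

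First I would dispose of the tails. Taking $\rho=\varepsilon$ (permissible since $\varepsilon\simeq0$ is below the standard thresholds $r_1,r_2$ of (C3), (C4)), conditions (C3) and (C4) give $h(x)-h(\xizero)\le h(\xizero-\varepsilon)-h(\xizero)$ for $x\in[a,\xizero-\varepsilon]$ and $h(x)-h(\xizero)\le h(\xizero+\varepsilon)-h(\xizero)$ for $x\in[\xizero+\varepsilon,b]$, and by \eqref{plan:peano} both right--hand sides are $\le-\tfrac c2\varepsilon^{2m}$, so $\nu(h(x)-h(\xizero))\le-\tfrac c2\nu\varepsilon^{2m}=-\tfrac c2\nu^{(m+1)/(2m+1)}$, which is infinitely negative. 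Exactly as in \eqref{estimate:002} this yields, uniformly in such $x$, $\nu^{1/(2m)}\exp[\nu(h(x)-h(\xizero))]\le C\nu^{-\alpha}\simeq0$ for some standard $C,\alpha>0$; multiplying by $|\varphi(x)|$ and using $\int_a^b|\varphi|<\infty$ (from (C1) at $n=0$, which also disposes of the cases $a=-\infty$, $b=+\infty$ at once) we get, as in \eqref{estimate:003}--\eqref{estimate:005},
\begin{align*}
\nu^{1/(2m)}\int_a^b\varphi(x)\exp[\nu(h(x)-h(\xizero))]\,dx\;\simeq\;\nu^{1/(2m)}\int_{\xizero-\varepsilon}^{\xizero+\varepsilon}\varphi(x)\exp[\nu(h(x)-h(\xizero))]\,dx.
\end{align*}

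For the central part I would change variables \emph{before} estimating. Setting $z:=(x-\xizero)(\nu c)^{1/(2m)}$ turns the right--hand side into $c^{-1/(2m)}\int_{|z|\le R}\varphi(x)\exp[\nu(h(x)-h(\xizero))]\,dz$ with $R:=\varepsilon(\nu c)^{1/(2m)}=c^{1/(2m)}\nu^{(m+1)/(2m(2m+1))}$ infinitely large; crucially the factor $\nu^{1/(2m)}$ has been swallowed by $dx$. By \eqref{plan:peano}, $\nu(h(x)-h(\xizero))=-z^{2m}+\nu\theta(x)$ with $|\nu\theta(x)|\le(\eta_0/c)z^{2m}$, hence $\exp[\nu(h(x)-h(\xizero))]=e^{-z^{2m}}e^{\nu\theta(x)}$ and, since $\eta_0/c\simeq0$,
\begin{align*}
\bigl|e^{-z^{2m}}e^{\nu\theta(x)}-e^{-z^{2m}}\bigr|\;\le\;e^{-z^{2m}}\tfrac{\eta_0}{c}z^{2m}e^{(\eta_0/c)z^{2m}}\;\le\;\tfrac{\eta_0}{c}\,z^{2m}e^{-z^{2m}/2}.
\end{align*}
Moreover $|x-\xizero|\le\varepsilon\simeq0$ forces $\varphi(x)\simeq\varphi(\xizero)$ uniformly, say $|\varphi(x)-\varphi(\xizero)|\le\omega\simeq0$ and $|\varphi(x)|\le|\varphi(\xizero)|+1$. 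As $z^{2m}e^{-z^{2m}/2}$ and $e^{-z^{2m}}$ have finite (standard) integrals over $\Real$, these two bounds together give
\begin{align*}
c^{-1/(2m)}\!\int_{|z|\le R}\!\varphi(x)\exp[\nu(h(x)-h(\xizero))]\,dz\;\simeq\;c^{-1/(2m)}\varphi(\xizero)\!\int_{|z|\le R}\!e^{-z^{2m}}dz\;\simeq\;c^{-1/(2m)}\varphi(\xizero)\!\int_{\Real}\!e^{-z^{2m}}dz,
\end{align*}
the last $\simeq$ because $R$ is infinitely large and $\int_\Real e^{-z^{2m}}dz$ converges. Finally $\int_\Real e^{-z^{2m}}dz=\Gamma\!\left(\theFraction\right)/m$ (substitute $u=z^{2m}$) and $c^{-1/(2m)}=\bigl(-(2m)!/h^{(2m)}(\xizero)\bigr)^{1/(2m)}$, which assemble into the claimed constant; transfer then completes the proof.

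Everything above except \eqref{plan:peano} is a transcription of Theorem~\ref{th:Laplace}, so the one genuinely new issue — and the thing to get right — is that (C2) has been weakened to (C2'): we no longer know that $h^{(2m)}$ is continuous, nor even that it exists, away from $\xizero$, so the ``integrate $h''$ twice'' computations of Theorem~\ref{th:Laplace} must be routed through Taylor's theorem with Peano remainder. A related subtlety is that \eqref{plan:peano} supplies only an \emph{unspecified} infinitesimal $\eta_0$, not a concrete negative power of $\nu$; consequently one cannot bound $\nu^{1/(2m)}\bigl|\varphi(x)\exp[\nu(h(x)-h(\xizero))]-\varphi(\xizero)e^{-z^{2m}}\bigr|$ pointwise on $[\xizero-\varepsilon,\xizero+\varepsilon]$ and only afterwards integrate — that would leave an indeterminate product of an infinitely large quantity with an infinitesimal. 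Changing variables first, so that $\nu^{1/(2m)}$ cancels and the residual error becomes (infinitesimal)$\,\times\,$(a fixed convergent integral), is what makes the estimate close.
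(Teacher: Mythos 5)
Your proof is correct, but it takes a genuinely different route from the paper's at the two places that matter. First, the paper transcribes the machinery of Theorem \ref{th:Laplace} literally: its estimate \eqref{estimate:001'} and its iterated-integral representation $p(x)=\frac{1}{(2m-1)!}\int_{\xizero}^x(x-t)^{2m-1}h^{(2m)}(t)\,dt$ both presuppose that $h^{(2m)}$ exists and is continuous in a neighbourhood of $\xizero$, which is part of (C2) but is \emph{not} contained in (C2') as stated; your Peano-remainder bound uses only the existence of $h^{(2m)}(\xizero)$, so your argument actually proves the theorem under its literal hypotheses where the paper tacitly strengthens them. Second, for the central piece the paper controls $\left\lvert\exp(\nu p)-\exp(\nu q)\right\rvert$ pointwise via \eqref{estimate:006'} and only then integrates; as you observe, with merely an unspecified infinitesimal in the Taylor error this pointwise route leaves an indeterminate product with $\nu^{1/(2m)}$ (and in fact \eqref{estimate:006'} carries the factor $1/\nu$ where differentiating $\exp(\nu t)$ produces $\nu$, so the corrected pointwise bound $\nu\,|p-q|$ is not infinitesimal for the paper's choice of $\varepsilon$). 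Your device of substituting $z=(x-\xizero)(\nu c)^{1/(2m)}$ \emph{before} estimating, so that the residual error becomes an infinitesimal multiple of the standard convergent integrals $\int_{\Real}z^{2m}e^{-z^{2m}/2}\,dz$ and $\int_{\Real}e^{-z^{2m}}\,dz$, is what closes this step cleanly and is the more robust argument. The remaining differences are minor: your $\varepsilon=\nu^{-1/(4m+2)}$ versus the paper's $\nu^{-1/(6m^2)}$ is immaterial, and your direct use of $\int_a^b\lvert\varphi\rvert\,dx<\infty$ from (C1) with $n=0$ replaces, and simplifies, the paper's appeal to Robinson's lemma for infinite endpoints.
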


The proof uses Taylor's theorem with integral remainder.
\begin{lemma} \label{lemma:taylor-expansion-with-integral-remainder} 
  Let $g$ be sufficiently differentiable on an interval containing $\xizero$.
Then, for every $x$ in the interval,
\begin{align} 
  g(x) - g(\xizero) = \sum_{k = 1}^{N-1} \frac{g^{(k)}(\xizero)}{k!} (x - \xizero)^k + \frac{1}{N!} \int_{\xizero}^x g^{(N)}(t)(x - t)^{N-1} \, dt. \label{2025-02-27-010} 
\end{align} 
\end{lemma}
\begin{proof} 
Fix a positive integer $k$.
Integration by parts gives
\begin{align} 
  \int_{\xizero}^x g^{(k)}(t)\,(x-t)^{k-1}\,dt &= - \frac{1}{k} \int_{\xizero}^x g^{(k)}(t)\: \frac{d}{dt} \left( (x-t)^{k} \right) \, dt \notag\\
   &= - \frac{1}{k} \left[ g^{(k)}(t) \: (x-t)^{k} \right]_{t = \xizero}^{t = x} + \frac{1}{k} \int_{\xizero}^x g^{(k+1)}(t)\:(x-t)^{k}\,dt \notag\\
    &= \frac{g^{(k)}(\xizero)}{k} (x - \xizero)^k + \frac{1}{k} \int_{\xizero}^x g^{(k+1)}(t)\:(x-t)^{k}\,dt. 
\end{align}
Starting from
\begin{align*} 
  g(x) - g(\xizero) =  \int_{\xizero}^x g^{(1)}(t)\: (x-t)^{0} \,dt, 
\end{align*} 
and applying the preceding identity repeatedly, we obtain the formula. 
\end{proof}

\begin{proof}[Proof of Theorem \ref{th:gen-Laplace}]

Since the argument is parallel to the proof of Theorem \ref{th:Laplace}, some details are omitted.

We first assume that $a$, $b$, $\varphi$, $h$, and $\xizero$ are standard.
Then the desired relation is equivalent to
\begin{align} 
  \forall \nu \in \NatInf\, \quad \nu^\theFraction \int_a^b \varphi(x) \funcexp{ \nu \, (h(x) - h(\xizero)) } dx \simeq \varphi(\xizero)\frac{\Gamma \left( \theFraction \right)}{m} \left( -\frac{(2m)!}{h^{(2m)}(\xizero)}\right)^\theFraction. \label{ex:purpose'} 
\end{align}

Fix an arbitrary $\nu\in \NatInf$ and set $\varepsilon := \nu^{-1/6m^2}$.
We shall use the following estimate.
\begin{lemma}%
\label{lemma:esimate:001'}
For every $x \in [\xizero - \varepsilon, \xizero + \varepsilon]$, one has
\begin{align} 
\frac{3}{2} h^{(2m)}(\xizero) \leq  h^{(2m)}(x) \leq \frac{1}{2} h^{(2m)}(\xizero)\,(<0). \label{estimate:001'}
\end{align}
\end{lemma}
\begin{proof}
The proof is the same as that of Lemma \ref{lemma:esimate:001}.
\end{proof}


As before, we split the integral into three parts:
\begin{align}
\nu^\theFraction \int_a^b \varphi(x) \funcexp{ \nu \, (h(x) - h(\xizero)) } dx &= \nu^\theFraction \int_a^{\xizero - \varepsilon} \varphi(x) \funcexp{ \nu \, (h(x) - h(\xizero)) } dx \notag\\
  &+  \nu^\theFraction \int_{\xizero - \varepsilon}^{\xizero + \varepsilon} \varphi(x) \funcexp{ \nu \, (h(x) - h(\xizero)) } dx \notag\\
  &+  \nu^\theFraction \int_{\xizero + \varepsilon}^b \varphi(x) \funcexp{ \nu \, (h(x) - h(\xizero)) } dx. \label{eqn:026'}
\end{align}

We estimate the first term on the right-hand side of \eqref{eqn:026'}.
For any $x \in [a, \xizero - \varepsilon]$, we have
\begin{align*}
h(x)  - h(\xizero) &\le h(\xizero - \varepsilon) - h(\xizero) \notag\\
&= \sum_{k = 1}^{2m-1} \underbrace{h^{(k)}(\xizero)}_{=0} \frac{(-\varepsilon)^k}{k!} \enskip + \enskip \frac{1}{(2m)!}\int_{\xizero}^{\xizero - \varepsilon} h^{(2m)}(t)\: (\xizero - \varepsilon - t)^{2m-1}\, dt\\
&= \frac{1}{(2m)!} \int_{\xizero - \varepsilon}^{\xizero} h^{(2m)}(t) \: \left\lvert t - (\xizero - \varepsilon)\right\rvert^{2m-1}\, dt\\
&\le \frac{1}{2}\frac{1}{(2m)!} h^{(2m)}(\xizero) \: \varepsilon^{2m-1} \int_{\xizero - \varepsilon}^{\xizero} dt\\
&= \frac{1}{2(2m)!}h^{(2m)}(\xizero) \:\varepsilon^{2m} = \frac{h^{(2m)}(\xizero)}{2(2m)!}\: \nu^{-1/3m}
\end{align*}
by Taylor's theorem, condition (C3) and \eqref{estimate:001'}.
Thus, $\displaystyle\nu(h(x) - h(\xizero)) \le \frac{h^{(2m)}(\xizero)}{2(2m)!}\nu^{1-\frac{1}{3m}} \:(<0)$ and, since $\nu^{1-\frac{1}{3m}}$ is infinitely large,
\begin{align}
\nu^\theFraction\enskip  \funcexp{ \nu \, (h(x) - h(\xizero)) } &\le \nu^\frac{2}{6m}  \funcexp{ \frac{h^{(2m)}(\xizero)}{2(2m)!}\nu^{1-\frac{2}{6m}}}\notag\\
& = \nu^{\frac{5}{6m}-1} \left( \nu^{1-\frac{2}{6m}} \funcexp{ \frac{h^{(2m)}(\xizero)}{2(2m)!} \nu^{1-\frac{2}{6m}} }  \right) \notag\\
& \le  C \,\nu^{\frac{5}{6m}-1}, \label{estimate:002'}\\
\intertext{for some positive constant $C$. Hence}
&\simeq 0.\notag
\end{align}

By the same argument as in the proof of Theorem \ref{th:Laplace}, we obtain
\begin{align}
&\int_{a}^{\xizero - \varepsilon} \nu^\theFraction \enskip \varphi(x) \funcexp{ \nu \, (h(x) - h(\xizero)) } dx \:\simeq \: 0\label{estimate:003'}\\
\intertext{and}
&\int_{\xizero + \varepsilon}^{b} \nu^\theFraction \enskip \varphi(x) \funcexp{ \nu \,(h(x) - h(\xizero)) } dx \:\simeq \: 0.\label{estimate:004'}
\end{align}

From \eqref{estimate:003'} and \eqref{estimate:004'} we have
\begin{align}
& \nu^\theFraction \int_{a}^{b} \varphi(x) \funcexp{ \nu \,(h(x) - h(\xizero))} dx \:\simeq\: \nu^\theFraction \int_{\xizero - \varepsilon}^{\xizero + \varepsilon} \varphi(x) \funcexp{ \nu \,(h(x) - h(\xizero))} dx.\label{estimate:005'}
\end{align}

For $x \in [\xizero - \varepsilon, \xizero + \varepsilon]$, set
\begin{align*}
&p(x):= h(x) - h(\xizero), \enskip q(x) := \frac{h^{(2m)}(\xizero)}{(2m)!}(x-\xizero)^{2m}.
\end{align*}
Clearly, $p(x),q(x)\le0$.
As before,
\begin{align}
&\left\lvert \funcexp{\nu\, p(x)} - \funcexp{\nu\, q(x)} \right\rvert \le \frac{1}{\nu} \left\lvert\int_{q(x)}^{p(x)} | \exp(\nu \  t) |\, dt \right\rvert \le \frac{|p(x) - q(x)|}{\nu}.\label{estimate:006'}
\end{align}

It is immediate from Taylor's formula that
\begin{align*}
p(x)&= \frac{1}{(2m-1)!} \int_{\xizero}^x (x-t)^{2m-1} h^{(2m)}(t) dt
\end{align*}
and
\begin{align}
q(x)&= \frac{1}{(2m-1)!}\int_{\xizero}^x (x-t)^{2m-1} h^{(2m)}(\xizero) \,dt.
\end{align}
Thus we have
\begin{align*}
|p(x) - q(x)| &\le \frac{1}{(2m-1)!}\left\lvert \int_{\xizero}^x |x-t|^{2m-1} \, \left\lvert h^{(2m)}(t) - h^{(2m)}(\xizero)\right\rvert \,dt  \right\rvert\\
&\le \frac{1}{2(2m-1)!} \left\lvert h^{(2m)}(\xizero) \right\rvert \left\lvert \int_{\xizero}^x |x-t|^{2m-1} \,dt\right\rvert\\
 &= \frac{1}{2(2m)!} \left\lvert h^{(2m)}(\xizero) \right\rvert \, |x - \xizero|^{2m} \le \frac{1}{2(2m)!}\left\lvert h^{(2m)}(\xizero) \right\rvert \varepsilon^{2m} = \frac{1}{2(2m)!}\left\lvert h^{(2m)}(\xizero) \right\rvert\nu^{-\frac{1}{3m}},
\end{align*}
by the estimate \eqref{estimate:001'}.
Substitution into \eqref{estimate:006'} gives
\begin{align*}
&\left\lvert \funcexp{\nu\, p(x)} - \funcexp{\nu\, q(x)}\right\rvert \le \frac{1}{2(2m)!}\left\lvert h^{(2m)}(\xizero) \right\rvert \nu^{-1-\frac{2}{6m}}.
\end{align*}
Hence
\begin{align*}
& \nu^\theFraction \:\funcexp{\nu\,p(x)} \:\simeq\: \nu^\theFraction \: \funcexp{\nu\,q(x)}.
\end{align*}
By the continuity of $\varphi$ at $\xizero$, we have $\varphi(x) \simeq \varphi(\xizero)$. Hence
\begin{align*}
\int_{\xizero - \varepsilon}^{\xizero + \varepsilon} \nu^\theFraction \:\varphi(x) \:\funcexp{\nu \, p(x)} \, dx \enskip &\simeq \int_{\xizero - \varepsilon}^{\xizero + \varepsilon} \nu^\theFraction\: \varphi(\xizero) \: \funcexp{\nu \, q(x)} \, dx\\
 &= \varphi(\xizero)\:\nu^\theFraction\:\int_{\xizero - \varepsilon}^{\xizero + \varepsilon}  \funcexp{\nu \, \left( \frac{h^{(2m)}(\xizero)}{(2m)!)}(x-\xizero)^{2m} \right)}  \, dx\\
\intertext{Put $\displaystyle z:= (x-\xizero)\left(-\nu \frac{h^{(2m)}(\xizero)}{(2m)!} \right)^\theFraction$.}
&= \varphi(\xizero) \:\nu^\theFraction\: \left( \nu \frac{-h^{(2m)}(\xizero)}{(2m)!}\right)^{-\theFraction} \int_{|z| \le R'} e^{-z^{2m}} \,dz\\
&= \varphi(\xizero) \: \left( \frac{-h^{(2m)}(\xizero)}{(2m)!}\right)^{-\theFraction} \int_{|z| \le R'} e^{-z^{2m}} \,dz, \tag{**}\label{ex:007'}
\end{align*}
where
\begin{align*}
&R' := \varepsilon \left(\nu \frac{-h^{(2m)}(\xizero)}{(2m)!} \right)^\theFraction = \nu^{-\frac{1}{6m^2} + \frac{3m}{6m^2}} \left(-\frac{h^{(2m)}(\xizero)}{(2m)!)} \right)^\theFraction.
\end{align*}
Since $R'$ is infinitely large, the integral in \eqref{ex:007'} is infinitely close to $\displaystyle \int_{-\infty}^{\infty} e^{-z^{2m}}\,dz = \frac{1}{m}\Gamma \left(\theFraction \right)$. Therefore,
\begin{align*}
&\eqref{ex:007'} \simeq \varphi(\xizero) \frac{\Gamma \left( \theFraction \right)}{m} \left( -\frac{(2m)!}{h^{(2m)}(\xizero)} \right)^\theFraction.
\end{align*}
Combining \eqref{estimate:005'} with the preceding estimate, we obtain
\begin{align*}
&\sqrt{\nu} \int_a^b \varphi(x) \funcexp{ \nu \,(h(x) - h(\xizero))  }\,dx \:\simeq\: \varphi(\xizero) \frac{\Gamma \left(\theFraction \right)}{m} \left( -\frac{(2m)!}{h^{(2m)}(\xizero)} \right)^\theFraction.
\end{align*}

Since $\nu$ was arbitrary, \eqref{ex:purpose'} is shown for any standard $a, b, \varphi(x), h(x)$ and $\xizero$.
By the transfer principle, \eqref{ex:purpose'} holds for any (possibly nonstandard) $a, b, \varphi(x), h(x)$ and $\xizero$.
This completes the proof.
\end{proof}

\section{Note}

In our proof of Theorem \ref{th:Laplace}, we could take $\varepsilon = \nu^{-1/(4+\delta)}$, by any standard positive $\delta$.

The statement of the theorem is adapted from Problem 201 in Part Two of \cite{Polya-and-Szego-1}.
More precisely, conditions (C3) and (C4) have been added to the hypotheses, while one condition from the original statement has been omitted.
Conditions (C3) and (C4) appear to hold in typical applications of the formula.

Laplace's theorem has already been proved by nonstandard methods in \cite{vdBerg-Asymp} from a broader point of view.
That proof uses general techniques involving external numbers.

After the initial submission, the author became aware that D. S. Jones also presents a proof of Laplace's theorem using nonstandard analysis in his book \cite{Jones-Asymp}, and a reference has been added accordingly.

\section*{Change History}

\begin{itemize}
  \item [v1.] First version.
  \item [v2.] \begin{itemize}
    \item The introductory heuristic discussion has been rewritten to improve readability.
    \item Conditions (C3) and (C4) have been simplified.
    \item An explanation of \eqref{estimate:001}, which had previously been omitted as being straightforward, has been added.
    \item The cumbersome estimates in the proof of Theorem \ref{th:gen-Laplace} have been substantially simplified by reducing them to Taylor's theorem with integral remainder.
    \item Minor adjustments have been made to several formulas and notational conventions.
    \item A note has been added concerning the choice of the infinitesimal width.
    \item A reference to the book by D. S. Jones has been added.
    \item  The English has been polished with the help of an LLM.
  \end{itemize}
  
\end{itemize}

\bibliographystyle{alpha}
\bibliography{bib20260608}

\end{document}